\newtheorem{thm}{Theorem}
\newtheorem{prop}{Proposition}[section]
\newtheorem{defn}[thm]{Definition}
\newtheorem{obs}[prop]{Observation}
\newtheorem{lem}[thm]{Lemma}
\newcommand{\bF}{\mathbb{F}}
\newcommand{\bN}{\mathbb{N}}
\newcommand{\bfa}{\mathbf{a}}
\newcommand{\bfb}{\mathbf{b}}
\newcommand{\mfa}{\mathfrak{A}}
\newcommand{\mfi}{\mathfrak{i}}
\newcommand{\mfj}{\mathfrak{j}}
\newcommand{\mcS}{\mathcal{S}}
\newcommand{\bft}{\mathbf{t}}
\newcommand{\bfx}{\mathbf{x}}
\newcommand{\bfy}{\mathbf{y}}
\newcommand{\X}{\mathcal{X}}
\newcommand{\Y}{\mathcal{Y}}
\newcommand{\fD}{f^{(D)}}
\newcommand{\Z}{\mathbb{Z}}
\title{The Weighted Davenport constant of a group and a related extremal problem - II }
\author{Niranjan Balachandran\footnote{Department of Mathematics, IIT Bombay. Email: niranj (at) math.iitb.ac.in}\ \  and 
Eshita Mazumdar\footnote{Stat-Math Unit, ISI Bengaluru. Email: eshita\_vs (at) isibang.ac.in}}
\begin{document}
\maketitle
\begin{abstract} 
For a finite abelian group $G$ with $\exp(G)=n$  and an integer $k\ge 2$, Balachandran and Mazumdar \cite{BM} introduced the extremal function $\fD_G(k)$ which is defined to be $\min\{|A|: \emptyset \neq A\subseteq[1,n-1]\textrm{\ with\ }D_A(G)\le k\}$ (and $\infty$ if there is no such $A$), where $D_A(G)$ denotes the $A$-weighted Davenport constant of the group $G$. Denoting $\fD_G(k)$ by $\fD(p,k)$ when $G=\bF_p$ (for $p$ prime), it is known (\cite{BM}) that $p^{1/k}-1\le \fD(p,k)\le O_k(p\log p)^{1/k}$ holds for each $k\ge 2$ and $p$ sufficiently large, and that for $k=2,4$, we have the sharper bound $\fD(p,k)\le O(p^{1/k})$. It was furthermore conjectured that $\fD(p,k)=\Theta(p^{1/k})$. In this short paper we prove that $\fD(p,k)\le 4^{k^2}p^{1/k}$ for sufficiently large primes $p$.
\end{abstract}

\textbf{Keywords:}
 Zero-sum problems, Davenport constant of a group.\\

2010 AMS Classification Code:  11B50, 11B75, 05D40.

\section{Introduction}
For a prime $p$, and  $a\ne b, a,b\in\bF_p$, we shall borrow the notation $[a,b]$ (from the usual integer case) to denote the set $\{a,a+1,\ldots,b\}$ for $a\neq b\in\bF_p$. Throughout this paper,  we follow the standard Landau asymptotic notation (see \cite{TaoVu} for instance): For functions $f,g$, we write $f(n)=O(g(n))$ if there exists an absolute constant $C>0$ and an integer $n_0$ such that for all $n\ge n_0, |f(n)|\le C|g(n)|$. We write $f=\Theta(g)$ if $f=O(g)$ and $g=O(f)$. If the constant $C=C(k)$ depends on another parameter $k$ (but not on $n$) then we shall denote this by writing $f=O_k(g)$. We also write $f\ll g$ if $\lim_{n\to\infty}\frac{f(n)}{g(n)}=0$. 
 
Suppose $G$ is a finite abelian group (written additively) with $\exp(G)=n$, and suppose $A\subseteq[1,n-1]$. The {\it $A$-weighted Davenport constant} of the group $G$ (introduced in Adhikari {\it et al}, see \cite{ACFKP}) is the least positive integer $k$ for which the following holds: Given an arbitrary sequence $(x_1,\ldots,x_k)$, with $x_i\in G$, there exists a non-empty subsequence $(x_{i_1},\ldots,x_{i_t})$ along with $a_{j}\in A$ such that $\displaystyle\sum_{j=1}^t a_jx_{i_j}=0$, where as usual, $ax=\overbrace{x+\cdots+x}^{a\textrm{\ times}}$. The weighted Davenport constant has been the primary object of study in several papers (see \cite {ACFKP, AC} for instance and some of the references in \cite{BM}) and determining $D_A(G)$  for some `natural' choices for the weight set $A$ for various categories of groups are questions that have garnered sufficient interest. 

In \cite{BM}, Balachandran and Mazumdar introduced a natural extremal problem associated to the weighted Davenport constant which is as follows. Suppose $G$ is a finite abelian group with $\exp(G)=n$ and $k\ge 2$. Define 
\begin{eqnarray*}\label{fdG_def} \fD_G(k)&:=&\min\{|A|: \emptyset \neq A\subseteq[1,n-1]\textrm{\ satisfies\ }D_A(G)\le k\},\\ 
                                         &:=&\infty\textrm{\ 
                                         if\ there\ is\ no\ 
                                         such\ }A.\end{eqnarray*}
Given a group $G$, determine $\fD_G(k)$ (if it is finite).

As it turns out, $\fD_G(k)<\infty$ for $k$ not `too large' (see \cite{BM}) and the most interesting case is when $G=\bF_p$ with $p$ being a prime (We  denote $\fD_G(k)$ by $\fD(p,k)$ for simplicity). 

In addition to being an interesting problem of independent interest, the problem of determining $\fD(p,k)$ also appears to be a generalization of certain well-studied notions for abelian groups. For instance, determining $\fD(p,2)$ is equivalent to finding a smallest difference base in the cyclic group $\Z_p$ (see \cite{BanGav} for a definition and related results)  and the general case is a vast generalization of this notion.

One of the main results in \cite{BM} is the following 
\begin{thm}\label{extreme_prob} Let $k\in\bN$, $k\ge 2$.  
\begin{enumerate}
\item[(a)] There exists an integer $p_0(k)$ and an absolute constant $C=C(k)>0$ such that for all prime $p>p_0(k)$, $$ p^{1/k}-1\le \fD(p,k)\le C(p\log p)^{1/k}.$$
\item[(b)] $\fD(p,k)\le Cp^{1/k}$ for $k=2,4$, for some absolute constant $C>0$.\end{enumerate}\end{thm}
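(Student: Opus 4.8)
The plan is to recast $D_A(\bF_p)\le k$ as a geometric covering condition and then build a small, \emph{multiplicatively} structured weight set $A$ that forces a weighted zero-sum by a step-by-step reduction. Since $A\subseteq[1,p-1]$ gives $0\notin A$, and leaving a coordinate unused is the same as assigning it weight $0$, the bound $D_A(\bF_p)\le k$ is equivalent to the following: for every $x=(x_1,\dots,x_k)\in(\bF_p^\ast)^k$ there is a \emph{nonzero} vector $a\in(A\cup\{0\})^k$ with $\langle a,x\rangle=0$ (the case where some $x_i=0$ is immediate from the length-one subsequence $(x_i)$). Equivalently, writing $T:=(A\cup\{0\})^k$, I want $T$ to meet every hyperplane $x^\perp$ through the origin with all-nonzero normal. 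A first sanity check is that $|T|=(|A|+1)^k$ must exceed $p$, which already pins the target at $|A|=\Theta(p^{1/k})$ and is consistent with the lower bound in Theorem~\ref{extreme_prob}(a); the factor $4^{k^2}$ is the slack I expect to spend forcing the covering uniformly in $x$.

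Next I would reduce the covering to a single intersection step and iterate. Writing $S_j:=\sum_{i\le j}(A\cup\{0\})x_i$ for the set of partial weighted sums, a nonzero representation of $0$ exists as soon as $S_{k-1}\cap(-A x_k)\neq\emptyset$, since then the last coordinate carries a genuine (nonzero) weight. Rewriting this, I need $A$ to meet every dilate $y\,S_{k-1}^{-1}$, i.e.\ the ratio set $\{s/a:\,s\in S_{k-1}\setminus\{0\},\,a\in A\}$ must be all of $\bF_p^\ast$, for every choice of $x_1,\dots,x_{k-1}$. For $k=2$ this collapses to the familiar requirement $A/A=\bF_p^\ast$, which is exactly the multiplicative difference-basis condition realized by $A=g^{E}$ with $E-E=\Z_{p-1}$ and $|E|=O(\sqrt p)$, recovering $\fD(p,2)=O(p^{1/2})$. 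The plan for general $k$ is to push this multiplicative picture through: take a primitive root $g$ and design the exponent set $\log_g A\subseteq\Z_{p-1}$ as a multi-scale (``$B_h$-like'') difference structure, so that the iterated sumset $S_{k-1}$ is forced to be spread across all multiplicative scales and its ratio set with $A$ exhausts $\bF_p^\ast$ regardless of the $x_i$. The redundancy built into such a multi-scale design over the $k$ stages is where the constant $4^{k^2}$ will accumulate.

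The main obstacle, and where essentially all the work lies, is forcing the threshold intersection $S_{k-1}\cap(-A x_k)\neq\emptyset$ \emph{for every} adversarial $x$. Here $|S_{k-1}|\le(|A|+1)^{k-1}\approx p^{(k-1)/k}$ while $|A x_k|=|A|\approx p^{1/k}$, so the product of the two sizes is only a bounded (in $p$) factor above $p$. At this regime generic tools fail: the union bound over the $\approx p^{k-1}$ hyperplanes is exactly what injects the spurious $(\log p)^{1/k}$ into the bound of \cite{BM}, and Fourier/exponential-sum estimates are also too weak, since their error term is of order $\sqrt{|S_{k-1}|\,|A x_k|}\approx\sqrt p$, comparable to the main term $|S_{k-1}|\,|A|/p$. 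Thus the improvement from $(p\log p)^{1/k}$ to $O_k(p^{1/k})$ cannot come from a soft probabilistic or analytic argument and must exploit the arithmetic structure of $A$ directly.

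Accordingly, the heart of the proposed proof is a structural (rather than probabilistic) verification that the multi-scale construction never stalls: I would show that for any $x_1,\dots,x_{k-1}$ the partial sumset $S_{k-1}$, pulled back to exponents, contains a full residue system at the relevant scale, so that multiplying by the next available block of $A$ lands on $-x_k A$ no matter what $x_k$ is. The delicate point I anticipate is controlling the interaction between the additive operation (the zero-sum) and the multiplicative operation (dilation by $x_i$): unlike the $k=2$ case, for $k\ge 3$ a vanishing sum of $k$ powers of $g$ is no longer equivalent to a single difference condition, so the reduction must genuinely use all $k$ coordinates in a balanced way and keep the scales from colliding. Making the multi-scale difference structure robust enough to survive every adversarial dilation—while keeping $|A|\le 4^{k^2}p^{1/k}$—is the step I expect to be hardest and to consume the bulk of the argument.
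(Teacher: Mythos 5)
Your setup is sound and your diagnosis of the difficulty is accurate, but the proposal has a genuine gap at its center: the ``multi-scale, $B_h$-like'' exponent set that is supposed to force $(S_{k-1}\setminus\{0\})/A=\bF_p^*$ for every adversarial choice of $x_1,\dots,x_{k-1}$ is never defined, and no argument is given that any such set of size $O_k(p^{1/k})$ exists. For $k=2$ the multiplicative picture does close (a difference basis of $\Z_{p-1}$ of size $O(\sqrt p)$ gives $A/A=\bF_p^*$), but for $k\ge 3$ you yourself observe that the required intersection sits exactly at the critical density $|S_{k-1}||A|\approx p$, where both the union bound and character-sum estimates fail; asserting that a structural construction will ``never stall'' is precisely the content of the theorem, not a proof of it. Note also that the statement you were asked to prove is quoted from \cite{BM}: part (a)'s upper bound $C(p\log p)^{1/k}$ is exactly what the union bound over the roughly $p^{k-1}$ choices of $(x_1,\dots,x_{k-1})$ delivers for a random $A$ --- you correctly identify this as the source of the log factor but never actually run that (comparatively easy) argument --- and part (b)'s case $k=4$ is not addressed at all by your outline.

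The device you are missing, and which both \cite{BM} and the present paper rely on, is Observation \ref{obs0}: if $|U||V|>p$ then $\bF_p=\frac{U-U}{V-V}$, by pigeonhole on the map $(u,v)\mapsto u-\lambda v$. This lets one split the weighted coordinates into a numerator and a denominator, so that it suffices to produce $A$ with $|A+\alpha_1A+\cdots+\alpha_{k-1}A|$ exceeding $2^k\sqrt p$ for all $\alpha_i$ --- each half only has to beat $\sqrt p$, rather than meet a set of size $p^{1/k}$ at density one. The paper then realizes $A$ as a union of symmetric progressions $X_t=t[-L,L]$, so that the relevant sumsets are generalized arithmetic progressions containing large difference-closed pieces (Observation \ref{obs1}), and uses a counting argument (Lemmas \ref{small} and \ref{intersect}) to select boundedly many dilation tuples whose ``bad sets'' have empty common intersection. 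Your purely multiplicative route, which insists on the single threshold intersection $S_{k-1}\cap(-Ax_k)\ne\emptyset$, attacks the problem in its hardest form; without the quotient-of-differences relaxation (or a genuinely new idea replacing it), I do not see how the plan can be completed.
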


It was conjectured in \cite{BM} that $\fD(p,k)=O(p^{1/k})$. The main result of this short paper is the following 
\begin{thm} \label{optfD} Suppose $k\ge 2$. Then there exists $p_0=p_0(k)$ such that for all primes $p\ge p_0$, 
$$\fD(p,k)\le 4^{k^2}p^{1/k}$$ so, in particular, $\fD(p,k)=\Theta_k(p^{1/k})$. \end{thm}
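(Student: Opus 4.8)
The plan is to avoid any probabilistic or sumset construction and instead to exhibit one explicit weight set, generalising the Dirichlet/difference-base argument that already settles the case $k=2$. Put $m:=\lceil p^{1/k}\rceil$ and take the symmetric interval
\[
A:=\{\pm 1,\pm 2,\ldots,\pm m\}\bmod p=\{1,\ldots,m\}\cup\{p-m,\ldots,p-1\}\subseteq[1,p-1],
\]
which has size $|A|=2m$ provided $2m<p$ (true for all large $p$, since $m$ has order $p^{1/k}$). I would then show directly that $D_A(\bF_p)\le k$; granting this, $\fD(p,k)\le 2m\le 2\lceil p^{1/k}\rceil\le 4p^{1/k}\le 4^{k^2}p^{1/k}$, so the stated bound follows with a great deal of room.

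To prove $D_A(\bF_p)\le k$, fix an arbitrary sequence $(x_1,\ldots,x_k)\in\bF_p^k$ and consider the evaluation map $\phi:\{0,1,\ldots,m\}^k\to\bF_p$, $\phi(c_1,\ldots,c_k)=\sum_{i=1}^k c_ix_i$. Because $m+1>p^{1/k}$, the domain has $(m+1)^k>p$ elements, so $\phi$ cannot be injective and there are distinct tuples $c\ne c'$ with $\phi(c)=\phi(c')$. Writing $d_i:=c_i-c_i'\in\{-m,\ldots,m\}$ and $I:=\{\,i:d_i\ne 0\,\}$, we obtain a nonempty index set with $\sum_{i\in I}d_ix_i=0$ and each weight $d_i$ ($i\in I$) lying in $A$; this is precisely a nonempty $A$-weighted zero-sum subsequence of $(x_1,\ldots,x_k)$.

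The remaining points are routine but should be recorded. The only way the collision could fail to give a bona fide weighted zero-sum is $|I|=1$, which forces $d_{i_0}x_{i_0}=0$ with $d_{i_0}\ne 0$, hence $x_{i_0}=0$; but in that case the length-one subsequence $(x_{i_0})$ is itself zero-sum for any weight, so this degenerate case is free. One also needs $1,\ldots,m,p-m,\ldots,p-1$ to be distinct nonzero residues, i.e. $2m<p$, which is exactly where the largeness hypothesis $p\ge p_0(k)$ enters. The one genuinely conceptual step—and the thing I expect to be the crux—is realising that the right object is the $\pm$-symmetric interval: its symmetry is what turns a pigeonhole collision, whose coefficients a priori lie in $[-m,m]$, into admissible weights from $A$. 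After that the proof is a single counting inequality, and since it is uniform in $k$ it needs no separate treatment of even versus odd $k$; in particular it yields the sharper bound $\fD(p,k)\le 2\lceil p^{1/k}\rceil$, improving the constant $4^{k^2}$.
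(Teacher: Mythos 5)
Your construction is the textbook pigeonhole argument for $A=\{\pm1\}$ (subset sums, $2^k>n$) transplanted to $A=\{\pm1,\dots,\pm m\}$, and the entire proof rests on one move: the collision gives $\sum_{i=1}^k d_ix_i=0$ with $d_i\in[-m,m]$ not all zero, and you then \emph{discard every index with $d_i=0$} and declare the surviving indices to be your subsequence. That is exactly the step you need to interrogate, because it is where your argument parts company with what this paper (and its predecessor \cite{BM}) actually proves. The paper's proof establishes the stronger property that for every $(x_1,\dots,x_k)\in(\bF_p^*)^k$ one has $0\in Ax_1+\cdots+Ax_k$, i.e.\ a representation of $0$ in which \emph{every} term of the sequence receives a weight from $A$; this is the content of the covering condition $\bF_p^*\subseteq (A+\alpha_1A+\cdots)/(A+\beta_1A+\cdots)$ and of the sentence ``$D_A(\bF_p)\le 2k$ implies that for any sequence $(x_1,\ldots,x_{2k})\in(\bF_p^*)^{2k}$ we have $0\in Ax_1+\cdots+Ax_{2k}$.'' For that property your set fails badly once $k\ge 3$: for the sequence $(1,1,\dots,1,t)$ a full representation forces $t\in-\frac{A+\cdots+A}{A}$, a set of size $O(km^2)=O\bigl(kp^{2/k}\bigr)\ll p$, so most $t$ admit none.

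So everything hinges on whether the definition of $D_A$ really lets you throw away the zero-coefficient terms. If it did, your two-line pigeonhole would give $\fD(p,k)\le 2\lceil p^{1/k}\rceil$ for every $k$, matching the lower bound $p^{1/k}-1$ of \cite{BM} up to a factor of $2$ --- which would render the conjecture $\fD(p,k)=O(p^{1/k})$ of \cite{BM}, the partial results there for $k=2,4$, and the constant $4^{k^2}$ of the present paper all vacuous. That is not a tenable reading of the state of the art: the notion the authors compute with requires the weighted representation to use the whole sequence of nonzero terms, and under it your proof collapses precisely at the ``restrict to the support'' step, with the example above as an explicit witness. At minimum you have proved a different statement from the one the paper's argument establishes, and you do not address the discrepancy; to rescue the approach you would need either to justify, against the definition actually used in \cite{BM}, that proper subsequences may be taken (and then explain why the problem was ever open), or to arrange that a collision can always be found with all $d_i\ne 0$ --- which is false for $A=\{\pm1,\dots,\pm m\}$ when $k\ge 3$.
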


This does not settle the aforementioned conjecture in the strong form mentioned there, but it is a substantial improvement on theorem \ref{extreme_prob}. The constants that are involved in our proof are far from optimal, and we make no attempt to optimize for them. 

We prove theorem \ref{optfD} in the next section. We  ignore floors and ceilings in the expressions that appear, in order to increase clarity of expression and facilitate ease of comprehension. The last section contains a few remarks and questions for further inquiry.
\section{Proof of theorem \ref{optfD}}
We first consider the case $\fD(p,2k)$. We generalize the proof of the case $k=4$ in \cite{BM} along with some other ideas. The basic scheme of proof is somewhat similar, so we recall it first, for the readers' convenience.

 In order to show $\fD(p,2k)\le O_k(p^{1/2k})$, it suffices to show the existence of $A\subset \bF_q^*$ of size $27^{k^2}p^{1/2k}$ (which is stronger than the result stated) such that for any $\alpha_1,\ldots,\alpha_{k-1},\beta_1,\ldots,\beta_{k-1}\in\bF_p^*$,
\begin{eqnarray}\label{condition_2k}\bF_p^*\subseteq 
\frac{A+\alpha_1 A+\cdots+\alpha_{k-1}A}
{A+\beta_1 A+\cdots+\beta_{k-1}A}.\end{eqnarray}

To see why this will suffice, note that $D_A(\bF_p)\le 2k$ implies that for any sequence $(x_1,\ldots,x_{2k})\in(\bF_p^*)^{2k}$,  we have $0\in Ax_1+\cdots+Ax_{2k}$. This is equivalent to requiring that
$$-\frac{x_1}{x_{k+1}} = \frac{a_{k+1}+a_{k+2}(x_{k+2}/x_{k+1})+\cdots+a_{2k}(x_{2k}/x_{k+1})}{a_1+a_2(x_2/x_1)+\cdots+a_k(x_k/x_1)}$$ holds for some $a_i\in A$,  
and if (\ref{condition_2k}) holds, then this is indeed satisfied. 

The following observation, which is also the starting point in \cite{BM}, is again the key to our scheme of proof.

\begin{obs}\label{obs0} If  $A,B\subseteq\bF_p$ satisfy $|A||B|>p$, then $\bF_p=\frac{A-A}{B-B}$.\end{obs}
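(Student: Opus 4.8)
The plan is to establish the containment $\bF_p\subseteq\frac{A-A}{B-B}$ by a direct pigeonhole argument carried out one target element at a time; the reverse containment $\frac{A-A}{B-B}\subseteq\bF_p$ is automatic, since every quotient of elements of $\bF_p$ again lies in $\bF_p$. So I would fix an arbitrary $c\in\bF_p$ and seek $a,a'\in A$ and $b,b'\in B$ with $b\neq b'$ satisfying $\frac{a-a'}{b-b'}=c$. The crucial reformulation is to clear the denominator: this equation is equivalent to the additive relation $a-cb=a'-cb'$, which is much easier to produce.

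The key idea is then to consider, for the fixed scalar $c$, the evaluation map $\phi_c\colon A\times B\to\bF_p$ given by $\phi_c(a,b)=a-cb$. Its domain has cardinality $|A\times B|=|A||B|>p=|\bF_p|$, strictly exceeding the size of its codomain, so $\phi_c$ cannot be injective. Hence there are two distinct pairs $(a,b)\neq(a',b')$ in $A\times B$ with $\phi_c(a,b)=\phi_c(a',b')$, that is, $a-cb=a'-cb'$, or equivalently $a-a'=c(b-b')$.

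The only point that requires a moment's care, and which I would single out as the one genuine subtlety, is verifying that the denominator we obtain is nonzero. This turns out to be automatic: if $b=b'$ held, then the relation $a-cb=a'-cb'$ would immediately force $a=a'$ as well, contradicting the distinctness of the two pairs. Therefore $b\neq b'$, the ratio $\frac{a-a'}{b-b'}$ is well defined, and it equals $c$ by construction. Since $c\in\bF_p$ was arbitrary, this yields $\bF_p\subseteq\frac{A-A}{B-B}$ and hence the asserted equality. (Note that the same hypothesis $|A||B|>p$ forces $|B|\geq 2$, since $A\subseteq\bF_p$ gives $|A|\leq p$, so $B-B$ genuinely contains nonzero elements and the statement is not vacuous.)
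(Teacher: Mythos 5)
Your proof is correct: the pigeonhole argument applied to the map $(a,b)\mapsto a-cb$ on $A\times B$, together with the observation that a collision with $b=b'$ would force $a=a'$, is exactly the standard argument for this fact. The paper states Observation \ref{obs0} without proof (deferring to \cite{BM}, where it is the starting point), and your argument is precisely the intended one, so there is nothing to add.
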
 

Hence to show (\ref{condition_2k}), it suffices to show the existence of a set $A$ of size at most the bound mentioned earlier, that satisfies the following:  $$\textrm{For\  any\ } \alpha_1\ldots,\alpha_{k-1}\in\bF_p^*,  \ \ |A+\alpha_1A+\cdots+\alpha_{k-1}A|>\sqrt{p}.$$
Let $L=Cp^{1/2k}$ where we shall determine $C$ later.  For a positive integer $t$, let $X_t:=t[-L,L]=\{-Lt,\ldots,-t, 0, t,\ldots,Lt\}$. It immediately follows that  $\alpha X_t=X_{\alpha t}$.  The following observation also follows from a  simple inductive argument (also see \cite{TaoVu} for generalized arithmetic progressions in finite abelian groups).

\begin{obs}
\label{obs1} For distinct $t_i$, 
$X_{t_1}+\cdots+ X_{t_{k}}$ contains a subset $Y$ 
of size at least $|X_{t_1}+X_{t_2}+\cdots+ X_{t_{k}}|/2^k$ 
such that $Y-Y\subseteq X_{t_1}+X_{t_2}+\cdots+ X_{t_{k}}$.\end{obs}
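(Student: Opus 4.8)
The plan is to take $Y$ to be the sumset of the ``nonnegative halves'' of the summands. For each $i$ set $Y_i:=\{0,t_i,2t_i,\ldots,Lt_i\}$ and let $Y:=Y_1+Y_2+\cdots+Y_k$. I would then carry out the argument in two essentially independent steps: first verify the containment $Y-Y\subseteq X_{t_1}+\cdots+X_{t_k}$, and then establish the size bound $|Y|\ge |X_{t_1}+\cdots+X_{t_k}|/2^k$.

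For the containment, observe that $Y_i-Y_i=\{(j-j')t_i : 0\le j,j'\le L\}=\{mt_i:-L\le m\le L\}=X_{t_i}$. Since the difference set of a sumset is contained in the sumset of the difference sets, $$Y-Y=\Big(\sum_i Y_i\Big)-\Big(\sum_i Y_i\Big)\subseteq \sum_i (Y_i-Y_i)=\sum_i X_{t_i},$$ which is exactly the required inclusion. This step uses nothing beyond the definitions.

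The heart of the matter is the size bound, and this is where the arithmetic-progression structure of $X_{t_i}$ (and not merely its symmetry about $0$) is essential. The key observation is that the negative half $-Y_i=\{0,-t_i,\ldots,-Lt_i\}$ is itself a \emph{translate} of $Y_i$, namely $-Y_i=Y_i-Lt_i$. Writing each summand as the union of its two halves, $X_{t_i}=Y_i\cup(-Y_i)$, and distributing the sumset over these unions, I would expand $$X_{t_1}+\cdots+X_{t_k}=\bigcup_{\ep\in\{+,-\}^k}\ \sum_{i=1}^k \ep_i Y_i,$$ where $\ep_i Y_i$ denotes $Y_i$ or $-Y_i$ according to the sign $\ep_i$. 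Replacing each $-Y_i$ by $Y_i-Lt_i$ shows that every one of these $2^k$ sets equals $Y-c_\ep$ for the constant $c_\ep=\sum_{i:\,\ep_i=-}Lt_i$; that is, each is a translate of $Y$. Hence $X_{t_1}+\cdots+X_{t_k}$ is covered by $2^k$ translates of $Y$, giving $|X_{t_1}+\cdots+X_{t_k}|\le 2^k|Y|$ and completing the proof.

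I do not anticipate a genuine obstacle. The only points needing care are the distributive expansion of the sumset over the unions and the translation identity $-Y_i=Y_i-Lt_i$, both elementary. The same bookkeeping can be phrased as an induction on $k$ that doubles the number of translates at each step (presumably the ``simple inductive argument'' alluded to), but I find the one-shot covering argument cleaner. I would also remark that the distinctness of the $t_i$ plays no role in this particular observation.
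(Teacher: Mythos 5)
Your proof is correct: taking $Y=X_{t_1}^++\cdots+X_{t_k}^+$, the containment $Y-Y\subseteq\sum_i X_{t_i}$ and the covering of $\sum_i X_{t_i}$ by the $2^k$ translates $Y-c_\ep$ both check out (all steps are valid set identities in $\bF_p$), and you are right that distinctness of the $t_i$ is not needed. The paper offers no written proof, only the remark that the statement ``follows from a simple inductive argument,'' and your one-shot covering argument is exactly that induction unrolled, so this is essentially the intended approach.
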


We shall now introduce some notation.
We shall write $I:=X_1=[-L,\ldots,-1,0,1,\ldots,L]$ for convenience. For sets $A_1,\ldots, A_r\subset\bF_p$ we shall denote the sum set $A_1+\cdots+A_r$ by $\sum_{i=1}^r A_i$. For  $t\in\bF_p^*$, we shall denote the set $\{ta:a\in A\}$ by $tA$.  We write $A_*:=A\setminus\{0\}$, and  finally, for $\mathbf{a}:=(\alpha_1,\ldots,\alpha_k),\mathbf{b}:=(\beta_1,\ldots,\beta_k)$ we shall write $\mathbf{a}\cdot\mathbf{b}:=(\alpha_1\beta_1,\ldots,\alpha_k\beta_k)$.


   
  For $A_1,\ldots, A_{k-1}\subseteq\bF_p$ define
 $$S(A_1,\ldots,A_{k-1}; B):= \left\{(t_1,\ldots,t_{k-1}) \in (\bF_p^*)^{k-1} :  (\sum_{i=1}^{k-1}t_iA_i) \cap B\neq\emptyset\right\}.$$
 
 \begin{defn} For a given $\bft:=(t_1,\ldots,t_{k-1}) \in (\bF_p^*)^{k-1}$, we say that  $\mathbf{a}:=(\alpha_1,\ldots,\alpha_{k-1}) \in (\bF_p^*)^{k-1}$ {\it is good for} $\mathbf{t}$ if 
 $$\left|(I+\alpha_1 X_{t_1}+\cdots+ \alpha_{k-1}X_{k-1})_*\right|\ge  \frac{L^k}{4^k}.$$\end{defn}
 
 Suppose $\bft\in(\bF_p^*)^{k-1}$. The following lemma tells us that if $\bfa$ is not good for $\bft$ then, in a sense (made precise by the lemma), $\bfa$ is restricted to a very small subset of $(\bF_p^*)^{k-1}$.  
\begin{lem}\label{small} Let $L$ as before, $B := [-2L, 2L]$, and for each $1\le i,j\le k-1$, define $A_i(j):= [\frac{-L}{2},\frac{L}{2}]$ if $i\ne j$, and $A_i(i)=[1,L/2]$. Let
 $\mcS_j:=S(A_1(j),\ldots,A_{k-1}(j);B)$ and $\mcS:=\displaystyle\cup_{j=1}^{k-1} \mcS_j$. Suppose $\bfa:=(\alpha_1,\ldots,\alpha_{k-1}) \in (\bF_p^*)^{k-1}$ is not good for $\bft:=(t_1,\ldots,t_{k-1})\in(\bF_p^*)^{k-1}$,  then $\mathbf{a}\cdot\mathbf{t} \in \mcS$. Moreover, $$|\mcS| < 3kL^{k}(p-1)^{k-2}< 3kC^k(p-1)^{k-\frac{3}{2}}\textrm{\ for\ sufficiently\ large\ }p.$$
\end{lem}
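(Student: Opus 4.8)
The plan is to prove Lemma~\ref{small} in two movements: first establish the set-membership assertion $\mathbf{a}\cdot\mathbf{t}\in\mcS$ by contrapositive, and then bound $|\mcS|$ by a direct counting argument over the defining sets $\mcS_j$. For the membership claim, suppose toward the contrapositive that $\mathbf{a}\cdot\mathbf{t}\notin\mcS$, i.e.\ $(\alpha_1 t_1,\ldots,\alpha_{k-1}t_{k-1})\notin\mcS_j$ for every $j$. Unwrapping the definition of $\mcS_j=S(A_1(j),\ldots,A_{k-1}(j);B)$, this means that for each $j$ the dilated sum $\sum_{i}(\alpha_i t_i)A_i(j)$ misses $B=[-2L,2L]$. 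The key idea I would use is that the sets $\alpha_i X_{t_i}=\alpha_i t_i[-L,L]$ decompose via $[-L,L]=[-L/2,L/2]+[-L/2,L/2]$ (up to the usual factor-of-two loss), so a collision $\alpha_i X_{t_i}$-sum against another $\alpha_i X_{t_i}$-sum forces a point of $B$ to lie in some $\sum_i(\alpha_i t_i)A_i(j)$. I would argue that if $\mathbf{a}\cdot\mathbf{t}$ avoids all the $\mcS_j$, then the sumset $I+\alpha_1 X_{t_1}+\cdots+\alpha_{k-1}X_{t_{k-1}}$ behaves like a genuine (non-degenerate) generalized arithmetic progression, so its size is at least $\tfrac{1}{2^k}\prod_i|X_{t_i}|\approx L^k/2^k$; after removing the zero element via the subscript $*$ and accounting for Observation~\ref{obs1}'s factor, one gets $\ge L^k/4^k$, i.e.\ $\mathbf{a}$ is good for $\mathbf{t}$. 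This is exactly the contrapositive of the claimed implication.

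For the cardinality bound, I would estimate each $|\mcS_j|$ separately and then use $|\mcS|\le\sum_{j=1}^{k-1}|\mcS_j|$. Fix $j$. A tuple $(t_1,\ldots,t_{k-1})\in\mcS_j$ satisfies $\sum_i t_i A_i(j)\cap B\neq\emptyset$, i.e.\ there exist $a_i\in A_i(j)$ and $b\in B$ with $\sum_i t_i a_i=b$. The counting strategy is to fix the witnessing data and count the number of compatible tuples $\mathbf{t}$. Since $A_i(i)=[1,L/2]$ contains no zero, the coordinate $t_i$ coupled to a nonzero coefficient $a_i$ is pinned down by the linear relation once all the other $t_\ell$ (there are $k-2$ of them, each ranging over $\bF_p^*$) and the $a_\ell,b$ are chosen. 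Concretely I would bound the number of choices by: $(p-1)^{k-2}$ for the free coordinates of $\mathbf{t}$, times $O(L^{k-1})$ for the values of the $a_i$, times $O(L)$ for $b$, with the remaining coordinate $t_j$ then determined (up to invertibility of the pinned coefficient, which is guaranteed because $A_j(j)=[1,L/2]\subset\bF_p^*$). Collecting these gives $|\mcS_j|=O(L^k(p-1)^{k-2})$, and summing over the $k-1$ values of $j$ yields $|\mcS|<3kL^k(p-1)^{k-2}$ with room to spare in the constant. The final inequality $3kL^k(p-1)^{k-2}<3kC^k(p-1)^{k-3/2}$ is then immediate from $L=Cp^{1/2k}$, since $L^k=C^k p^{1/2}$ and $p^{1/2}<(p-1)^{1/2}\cdot(1+o(1))$, so $L^k(p-1)^{k-2}<C^k(p-1)^{k-3/2}$ for all sufficiently large $p$.

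The main obstacle I anticipate is the membership step rather than the counting step. The counting is a routine ``fix the linear witness, pin one variable'' argument, and the final asymptotic comparison is bookkeeping. The delicate part is correctly tracking the halving of the intervals: one must verify that the decomposition $[-L,L]\supseteq[-L/2,L/2]+[-L/2,L/2]$ and the choice $A_i(i)=[1,L/2]$ (rather than a symmetric interval) are precisely what is needed so that a failure of $\mathbf{a}$ to be good for $\mathbf{t}$ forces a nontrivial additive collision landing inside $B=[-2L,2L]$, and that this collision is captured by \emph{at least one} index $j$. In other words, the crux is the combinatorial claim that non-goodness---a statement about the sumset being too small---can only arise from a coincidence among the dilates $\alpha_i t_i$, and that such a coincidence is exactly what membership in some $\mcS_j$ encodes. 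I would spend most of the proof making this implication airtight, using Observation~\ref{obs1} to convert sumset size into a difference-set containment and then a pigeonhole/Plünnecke-type lower bound on $\big|I+\sum_i\alpha_i X_{t_i}\big|$ when no such coincidence occurs.
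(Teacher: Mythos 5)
Your counting half is correct and coincides with the paper's argument: for $\bft\in\mcS_j$ you fix the witnesses $a_i\in A_i(j)$, $b\in B$ and the $k-2$ free coordinates of $\bft$, and since $a_j\in[1,L/2]$ is invertible in $\bF_p$ the remaining coordinate is determined, giving $|\mcS_j|\le (L/2)(L+1)^{k-2}(4L+1)(p-1)^{k-2}<3L^k(p-1)^{k-2}$ and the stated bound after summing over $j$. The membership half is also the right strategy in outline (the paper proves the direct implication, you propose the contrapositive, which is the same thing), but as written it has a genuine gap --- one you flag yourself without closing.

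The problem is the step ``a collision forces a point of $B$ to lie in some $\sum_i(\alpha_it_i)A_i(j)$.'' A collision $\xi+\sum_iu_i\alpha_it_i=\xi'+\sum_iu_i'\alpha_it_i$ with $u_i,u_i'\in[-L,L]$ gives $\sum_i(u_i-u_i')\alpha_it_i=\xi'-\xi\in B$, but the difference vector lies in $[-2L,2L]^{k-1}$, not in $[-L/2,L/2]^{k-1}$, so it need not be witnessed by any $\mcS_j$; the same defect persists for your proposed half-intervals $[-L/2,L/2]$, whose pairwise differences fill $[-L,L]$. What is needed --- and what the paper does by examining only index tuples with coordinates up to roughly $L/4$ and locating the first ``invalid'' partial union --- is to shrink the outer index box to half-width $L/4$, so that every nonzero difference of two index tuples lies in $[-L/2,L/2]^{k-1}$ and, after swapping the two colliding tuples if necessary, has some coordinate $r$ in $[1,L/2]=A_r(r)$. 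Only then does $\bfa\cdot\bft\notin\mcS$ force the map $(\xi,\mathbf{u})\mapsto\xi+\sum_iu_i\alpha_it_i$ to be injective on $[-L,L]\times[-L/4,L/4]^{k-1}$, giving $\left|(I+\alpha_1X_{t_1}+\cdots+\alpha_{k-1}X_{t_{k-1}})_*\right|\ge 2L(L/2)^{k-1}-1\ge L^k/4^k$. This shrinking is exactly where the $4^k$ in the definition of ``good'' comes from; your accounting attributes it to Observation \ref{obs1} and to deleting $0$, which is not right --- Observation \ref{obs1} is not used in this lemma at all (it enters only in Lemma \ref{intersect}), and deleting $0$ costs a single element, not a constant factor. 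Once you replace the decomposition $[-L/2,L/2]+[-L/2,L/2]$ by the $L/4$-box restriction, handle the sign of the distinguished coordinate, and drop the appeal to Observation \ref{obs1} and to Pl\"unnecke (neither is needed), your contrapositive closes and is, if anything, a little cleaner than the paper's ordering-and-first-failure bookkeeping.
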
 
\begin{proof} Since 
$\alpha X_{t}=X_{\alpha t}$, it suffices to show that 
  $$|(I+ X_{t_1}+\cdots+ X_{t_{k-1}})_*|< \frac{L^k}{4^k}\ \Rightarrow\ \bft\in \mcS.$$

Set $X_t^+:=\{0,t,\ldots,Lt\}$. If $\mfj$ denotes the $(k-1)$-tuple $(j_1,\ldots,j_{k-1})$, then observe that  
$$I+X_{t_1}^++\cdots+ X_{t_{k-1}}^+=
\displaystyle\bigcup_{\mfj\in[0,L]^{k-1}} 
[j_1t_1+\cdots+j_{k-1}t_{k-1}-L,j_1t_1+\cdots+j_{k-1}t_{k-1}+L].$$

Put an arbitrary linear order $\le$ on  $[0,L]^{k-1}$ with least element ${\bf 0}:=(0,\ldots,0)$ and define \begin{eqnarray*}
\X({\bf 0})&:=&[-L,L],\\
 \X(\mfi)&:=&\bigcup_{\mfj\le\mfi}\ [\mfj\cdot\bft - L,\mfj\cdot\bft +L] =\bigcup_{\mfj\le\mfi}\  [j_1t_1+\cdots+j_{k-1}t_{k-1}-L, j_1t_1+\cdots+j_{k-1}t_{k-1}+L].\end{eqnarray*}

Call the set $\X(\mfi)$  {\it valid} if it is the union of pairwise disjoint intervals each of length $2L$, centred around an element 
of $\displaystyle\sum_{j=1}^{k-1}X_{t_j}^+$. Clearly, $\X(0,\ldots,0)$ is valid.

 We now claim that there exists $\mfi$ with some $i_j\le L/2$ such that $\X(\mfi)$ is not valid. Indeed, suppose $\X(\mfi)$ is valid for all such $\mfi$. In particular, for $M_j=\lceil L/4\rceil$, 
we have $$|\X(M_1,\ldots,M_{k-1})|= 2L\prod_{j=1}^{k-1}M_j\ge (2L)\left(\frac{L}{4}\right)^{k-1}\ge 2\frac{L^k}{4^{k-1}}$$ 
contradicting the hypothesis. 

Let $\mfi=({i_1},\ldots,{i_{k-1}})$ be the first $(k-1)$-tuple with respect to the linear order for which $\X({i_1},\ldots,{i_{k-1}})$ is not valid. In particular, there exists $\mfj=(j_1\ldots,j_{k-1})\ne\mfi$ and $1\le r\le k-1$ such that $i_r<j_r$ and
$$\sum_{l=1}^{k-1}i_lt_l + \xi_1 = \sum_{l=1}^{k-1}j_lt_l + \xi_2$$ for some $\xi_1,\xi_2\in [-L,L]$. Consequently, 
$$\bft\cdot(\mfj -\mfi)=\xi_1-\xi_2\ \Rightarrow\ \bft\in\mcS_r.$$

To complete the proof, we need to show the bound on $|\mcS|$. We shall first show that $|\mcS_1|< 3L^k(p-1)^{k-2}$ for sufficiently large $p$.

First observe that $(t_1\ldots,t_{k-1})\in \mcS_1$ implies that there exists $a_i\in A_i(1)$ for $1\le i\le k-1$ and $b\in B$ such that $t_1a_1+\cdots t_{k-1}a_{k-1}=b$. 
For fixed choices of $a_i\in A_i(1)$ for $1\le i\le k-1$ and $b\in B$, along with choices of $t_2,\ldots,t_{k-1}\in\bF_p^*$, the equation $t_1a_1+\cdots t_{k-1}a_{k-1}=b$ admits a unique solution for $t\in\bF_p$. In particular, it follows that $$|\mcS_1|\le (L/2)\cdot (L+1)^{k-2}\cdot(4L+1)\cdot (p-1)^{k-2} <3L^kp^{k-2}$$ for $p$ sufficiently large as claimed. 

Now the bound on $|\mcS|$ follows by a similar bound for each $|\mcS_j|$.
\end{proof}



 In the rest of the paper, $\mcS$ shall denote the set described in the statement of lemma \ref{small}.
\begin{lem}\label{intersect} Suppose that there exist $\bfy_i=(y_1^{(i)},\ldots, y_{k-1}^{(i)})\in (\bF_p^*)^{k-1}$ for $1\le i\le N$ such that 
$$\displaystyle\bigcap_{i=1}^N \bfy_i\cdot \mcS =\emptyset.$$ Write $x_r^{(i)}=(y_r^{(i)})^{-1}$ for $1\le i\le N, 1\le r\le k-1$, and set $\bfx_i=(x_1^{(i)},\ldots,x_{k-1}^{(i)})$.
Then the set 
$$A=\displaystyle \left(I\cup\bigcup_{\substack{1\le j\le N\\1\le r\le k-1}}X_{x_j^{(r)}}\right)_*\subset [1,p-1]$$ 
satisfies $D_A(\bF_p)\le 2k$. In particular, $\fD(p,2k)\le 2kNL$.\end{lem}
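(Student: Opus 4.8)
The plan is to verify the criterion for $D_A(\bF_p)\le 2k$ recalled before Observation~\ref{obs0}. The one point I would settle first is that $0\notin A$, while the sumsets $I+t_1X_{\cdot}+\cdots$ used in the definition of \emph{good} involve the $0$ of each $X$. This is harmless, because the weighted Davenport constant is defined via nonempty \emph{subsequences}: assigning weight $0$ to a coordinate just means omitting it. Hence, writing $A_0:=A\cup\{0\}$, it suffices to produce for every $(x_1,\dots,x_{2k})\in(\bF_p^*)^{2k}$ a tuple in $A_0^{2k}\setminus\{0\}$ with $\sum_i b_ix_i=0$, and carrying out the same division as in the excerpt this reduces to the $A_0$-analogue of (\ref{condition_2k}): for all $\alpha_i,\beta_i\in\bF_p^*$,
\[
\bF_p^*\subseteq\frac{A_0+\beta_1A_0+\cdots+\beta_{k-1}A_0}{A_0+\alpha_1A_0+\cdots+\alpha_{k-1}A_0}.
\]
(The target ratio is nonzero, so the numerator is a nonzero element of the $\beta$-sumset, forcing some $b_i\ne0$; this is what makes the resulting subsequence nonempty.) I would therefore aim at the following single statement: for every $\bft=(t_1,\dots,t_{k-1})\in(\bF_p^*)^{k-1}$ the set $U_0:=A_0+t_1A_0+\cdots+t_{k-1}A_0$ contains a set $Y$ with $Y-Y\subseteq U_0$ and $|Y|>\sqrt p$. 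Granting this for the tuple $(\alpha_1,\dots,\alpha_{k-1})$ and for $(\beta_1,\dots,\beta_{k-1})$ produces difference-closed sets $Y,Y'$ of size $>\sqrt p$ inside the two sumsets, and Observation~\ref{obs0} gives $\bF_p=\frac{Y-Y}{Y'-Y'}\subseteq\frac{U_0}{V_0}$, which is the displayed inclusion.

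To prove that statement for a fixed $\bft$ I would invoke the hypothesis $\bigcap_{i=1}^N\bfy_i\cdot\mcS=\emptyset$. Emptiness means no point lies in every $\bfy_i\cdot\mcS$; in particular $\bft\notin\bfy_i\cdot\mcS$ for some $i$, i.e. $\bfx_i\cdot\bft=\bfy_i^{-1}\cdot\bft\notin\mcS$. By the contrapositive of Lemma~\ref{small} (taking its "$\bfa$" to be $\bfx_i$ and its "$\bft$" to be our $\bft$), the tuple $\bfx_i$ is good for $\bft$, that is,
\[
\bigl|(I+x_1^{(i)}X_{t_1}+\cdots+x_{k-1}^{(i)}X_{t_{k-1}})_*\bigr|\ge \frac{L^k}{4^k}.
\]
Using $x_r^{(i)}X_{t_r}=X_{x_r^{(i)}t_r}=t_rX_{x_r^{(i)}}$, the set inside is $\Sigma:=I+t_1X_{x_1^{(i)}}+\cdots+t_{k-1}X_{x_{k-1}^{(i)}}$, so $|\Sigma|\ge L^k/4^k$.

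Now I would place $\Sigma$ inside $U_0$ and apply Observation~\ref{obs1}. Each $X_{x_r^{(i)}}$ equals $(X_{x_r^{(i)}})_*\cup\{0\}\subseteq A\cup\{0\}=A_0$, and likewise $I=X_1\subseteq A_0$; hence $\Sigma\subseteq A_0+t_1A_0+\cdots+t_{k-1}A_0=U_0$. Writing $\Sigma=X_1+X_{t_1x_1^{(i)}}+\cdots+X_{t_{k-1}x_{k-1}^{(i)}}$ as a sum of $k$ dilates of $[-L,L]$, Observation~\ref{obs1} yields $Y$ with $Y-Y\subseteq\Sigma\subseteq U_0$ and
\[
|Y|\ge\frac{|\Sigma|}{2^k}\ge\frac{L^k}{8^k}=\Bigl(\frac{C}{8}\Bigr)^k\sqrt p>\sqrt p
\]
provided $C>8$, which we may assume. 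This is exactly the difference-closed subset needed, so by the first paragraph $D_A(\bF_p)\le 2k$. Finally $A$ is the union of $I$ with the at most $N(k-1)$ dilates $X_{x_r^{(i)}}$, each of size $2L+1$, so $|A|\le(1+N(k-1))(2L+1)$, which (ignoring the additive constants as throughout) is at most $2kNL$; hence $\fD(p,2k)\le 2kNL$.

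I expect two places to need the most care. The first, already flagged, is the role of $0$: one must confirm that the passage to $A_0$ genuinely certifies a \emph{nonempty} weighted subsequence, which it does since the numerator $y_1-y_2$ we extract is nonzero and therefore not all of its $A_0$-weights vanish. The second is the distinctness clause "for distinct $t_i$" in Observation~\ref{obs1}: when two of the dilation parameters $1,t_1x_1^{(i)},\dots,t_{k-1}x_{k-1}^{(i)}$ coincide I would check that the underlying half-interval construction still gives $Y-Y\subseteq\Sigma$ with $|Y|\ge|\Sigma|/2^k$, which it does because the estimate only uses that each factor $X_t$ contains a one-sided set $X_t^+$ with $X_t^+-X_t^+\subseteq X_t$ and $|X_t^+|\ge|X_t|/2$. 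Everything else is bookkeeping on the constant $C$.
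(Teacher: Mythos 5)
Your proof follows the paper's argument exactly: use the empty intersection to find an index $i$ with the relevant tuple outside $\bfy_i\cdot\mcS$, apply the contrapositive of Lemma~\ref{small} to conclude goodness, then combine Observation~\ref{obs1} (with $C>8$) and Observation~\ref{obs0}. The extra care you take over the removal of $0$ from $A$ (via weight $0$ meaning omission from the subsequence) and over the distinctness clause in Observation~\ref{obs1} correctly fills in details the paper leaves implicit, but does not change the route.
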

\begin{proof} Let $\bfa = (\alpha_1,\ldots,\alpha_{k-1})\in (\bF_p^*)^{k-1}$. We shall show that
$$|I+\alpha_1 X_{x_1^{(i)}}+\cdots+ \alpha_{k-1} X_{x_{k-1}^{(i)}}| \ge \frac{L^k}{4^k}$$
for some $1\le i\le N$. Then by observation \ref{obs1}, 
there exists $Y_{\bfa}\subset I+\alpha_1 X_{x_1^{(i)}}+\ldots+ \alpha_{k-1} X_{x_{k-1}^{(i)}}$
with $|Y_{\bfa}|>p^{1/2}$ (if $C>8$) such that 
$Y_{\bfa}-Y_{\bfa}\subseteq I+\alpha_1 X_{x_1^{(i)}}+\ldots+ \alpha_{k-1} X_{x_{k-1}^{(i)}}$. 
Since this holds for all $\bfa\in(\bF_p^*)^{k-1}$ the proof of lemma \ref{intersect} is complete by observation \ref{obs0}.

Since $\displaystyle\bigcap_0^{N} \bfy_i\cdot\mcS=\emptyset$, there exists $1\le i\le N$ such that  $\bfa\not\in \bfy_i\cdot \mcS$. 
But then, by lemma \ref{small}, this implies that  $\bfa$ is good for  $\bfy_i^{-1}=\bfx_i$, or equivalently,
$$|I+\alpha_1 X_{x_1^{(i)}}+\ldots+ \alpha_{k-1} X_{x_{k-1}^{(i)}}| \ge \frac{L^k}{4^k}$$ as required. \end{proof}
We are now in a position to prove theorem \ref{optfD}.
\begin{proof} (of theorem \ref{optfD}) We shall denote by $\bfy$, a typical element in $(\bF_p^*)^{k-1}$. For $\Y = (\bfy_1,\ldots,\bfy_{2k-3})\in((\bF_p^*)^{k-1})^{2k-3}$, we say that $\mfa = (\bfa_0,\bfa_1,\ldots,\bfa_{2k-3})\in \mcS^{2k-2}$ is {\it binding} for $\Y$ if
$$\bfa_0 =\bfy_1\cdot\bfa_1= \cdots = \bfy_{2k-3}\cdot \bfa_{2k-3}.$$
We shall call $\bfa_0$ the leading element of $\mfa$. Clearly, each $\mfa$ determines a unique $\Y\in((\bF_p^*)^{k-1})^{2k-3}$ such that $\mfa$ is binding for $\Y$. For $\Y\in((\bF_p^*)^{k-1})^{2k-3}$ define
\begin{eqnarray*}
\mfa(\Y)&:=&\{\mfa:\mfa \text{ is binding for }\Y\},\\
 N(\Y)&:=& |\mfa(\Y)|,\\
\textrm{NORMAL}&:=& \{\Y \in ((\bF_p^*)^{k-1})^{2k-3} : N(\Y) \leq 2(3kC^k)^{2k-2}\}.\end{eqnarray*}

Suppose $\Y$ is chosen uniformly at random from $((\bF_p^*)^{k-1})^{2k-3}.$  Then
$$\mathbb{E}(N(\Y)) = \sum_{\mfa\in \mcS^{2k-2}} \mathbb{P}\left(\mfa \text{ is binding for  } \Y\right) =\frac{|\mcS|^{2k-2}}{(p-1)^{(k-1)(2k-3)}}<(3kC^k)^{2k-2}=C^*, \textrm{\ (say)}$$ 
Hence by Markov's Inequality, it follows that 
\begin{eqnarray}\label{NRMLD}
|\text{NORMAL}|\geq \frac{1}{2}(p-1)^{(k-1)(2k-3)}.
\end{eqnarray}

 Pick $\Y_1=(\bfy_1^{(1)},\ldots,\bfy_{2k-3}^{(1)}) \in$ NORMAL arbitrarily. Write $\mfa(\Y_1)= \{\mfa_1,\ldots,\mfa_{\ell}\}$ with $\ell\le C^*$. Let $\mfa_i=(\bfa_0[i],\bfa_1[i],\ldots,\bfa_{2k-3}[i])$. 
 Since $\Y_1\in\textrm{NORMAL}$, 
$$\bfa_0[i]=\bfy_1^{(1)}\cdot\bfa_1[i] =\cdots=\bfy_{2k-3}^{(1)}\cdot\bfa_{2k-3}[i]\textrm{\ \ for\ all\ }1\le i\le \ell.$$  
The number of  $\Y\in\textrm{NORMAL}$ such that $(\bfa_0[1],\bfb_1,\ldots,\bfb_{2k-3})\in\mfa(\Y)$ is at most $|S|^{2k-3}\ll |\textrm{NORMAL}|$, so that in particular, there exists $\Y_2=(\bfy_1^{(2)},\ldots,\bfy_{2k-3}^{(2)})\in\textrm{NORMAL}$ such that $\bfa_0[1]$ is not a leading element of any $\mfa\in\mfa(\Y_2)$. Thus, having chosen $\Y_1,\ldots,\Y_i\in\textrm{NORMAL}$, we inductively  pick $\Y_{i+1}\in\textrm{NORMAL}$ such that $\bfa_0[i]$ is not a leading element of any $\mfa\in\mfa(\Y_{i+1})$, and these choices are possible by the same argument described above, and note that this procedure terminates in at most $T\le \ell$ steps. As a consequence of these choices, it follows immediately that 
$$\mcS\cap\left(\bigcap_{\substack{1\le i\le 2k-3\\ 1\le j\le T}} \bfy_i^{(j)}\cdot\mcS \right)=\emptyset$$
so we make take $N=1+(2k-3)T\le 2k (3kC^k)^{2k-2}$.

Putting all the ingredients together, we have $$\fD(p,2k)\le 2kNL\le (4k^2) (3kC^k)^{2k-2} (Cp^{1/2k})\le (27)^{k^2}p^{1/2k}$$ for all $k\ge 2$.

For the odd case, the proof moves along exactly the same lines. To bound $\fD(p, 2k+1)$, we need to describe a set $A\subset\bF_p^*$ such that for any $\alpha_1,\ldots,\alpha_{k},\beta_,\ldots,\beta_{k-1}\in\bF_p^*$ 
$$\bF_p^*\subseteq\frac{A+\alpha_1A+\cdots+\alpha_k A}{A+\beta_1A+\cdots+\beta_{k-1}A}$$ holds. Keeping with our scheme of proof, it will suffice to show that for $r=k-1,k$, and any $\alpha_1\ldots,\alpha_{r}\in\bF_p^*$ there exists $A\subset\bF_p^*$ such that 
$|A+\alpha_1A+\cdots+\alpha_rA|>p^{\frac{r+1}{2k+1}}$. We now imitate the same argument to obtain $A_r\subset\bF_p^*$ that works for $r$, and finally $A=A_{k-1}\cup A_k$ does the job. It is  somewhat routine to check that the bound on $|A|$ as stated in theorem \ref{optfD} indeed holds, so we omit those details.
\end{proof}

\section{Concluding remarks}
\begin{itemize}
\item It should be quite clear that the dependence on $k$ in the constant in theorem \ref{optfD} is far from best possible. We still believe that $\fD(p,k)\le Cp^{1/k}$ for an absolute constant $C$, which is still out of our reach.
\item A closer inspection of the proof of theorem \ref{extreme_prob} in \cite{BM} actually reveals that the proof of the first part of theorem \ref{extreme_prob} holds for all $k\ll\frac{\log p}{\log\log p}$, which makes that theorem quite robust because the problem of determining $\fD(p,k)$ is relevant only for $k\le\log_2 p+1$ (see proposition 4.1 in \cite{BM}). In contrast, the bound in theorem \ref{optfD} is suboptimal to the bound in theorem \ref{extreme_prob} unless $k\ll (\log\log p)^{\frac{1}{3}}$. 
\item As pointed out in \cite{BM} the problem of determining $\fD_G(k)$ for arbitrary abelian groups $G$ reduces to the case(s) $G=\bF_p$ (resp. $G=\bF_p^r$)  as the most relevant one because one can choose weights that project all the weighted elements in to some small subgroup of $G$. This raises a more interesting {\it irreducible}  variant of the same extremal problem: Given a finite abelian group $G$ with $\exp(G)=n$, define  
\begin{eqnarray*}\label{fdG_def} I\fD_G(k)&:=&\min\{|A|: \emptyset \neq A\subseteq\Z_n^*\textrm{\ satisfies\ }D_A(G)\le k\},\\ 
                                         &:=&\infty\textrm{\ 
                                         if\ there\ is\ no\ 
                                         such\ }A.\end{eqnarray*}
Given a group $G$, determine $I\fD_G(k)$ (if it is finite). 

This latter extremal function does not permit us the trick of projecting into a smaller subgroup, and henceforth poses more interesting possibilities.
\end{itemize}

\end{document}